\DeclareMathAlphabet{\mathcal}{OMS}{cmsy}{m}{n}
\newcommand{\CC}{\mathcal{C}}
\newcommand{\Li}{\mathcal{L}^D_i}
\newcommand{\Ld}{\mathcal{L}^D}
\newcommand{\Euc}{\mathrm{\textnormal{\textbf{Euc}}}}
\newcommand{\OptC}{\mathrm{\textnormal{\textbf{Opt}}}_\CC^R}
\newcommand{\OptEuc}{\mathrm{\textnormal{\textbf{Opt}}}_\Euc^\R}
\newcommand{\DynamC}{\mathrm{\textnormal{\textbf{Dynam}}}_\CC}
\newcommand{\Set}{\mathrm{\textnormal{\textbf{Set}}}}
\newcommand{\maps}{\colon}
\newcommand{\R}{\mathbb{R}}
\DeclareMathOperator{\Para}{Para}
\newcommand{\id}{\mathrm{id}}
\newcommand{\define}[1]{\textbf{#1}}
\newcommand{\gd}{\mathsf{gd}}
\newcommand{\GD}{\mathsf{GD}_\CC}
\newcommand{\GDEuc}{\mathsf{GD}_\Euc}
\newcommand{\rev}{\mathrm{\textnormal{\textbf{R}}}}
\newcommand{\fwd}{\mathrm{\textnormal{\textbf{D}}}}
\newcommand{\lin}{\mathsf{Lin}}
\newcommand{\op}{^\mathrm{op}}
\newcommand{\1}{\{*\}}
\newtheorem{theorem}{Theorem}[section]
\newtheorem{corollary}{Corollary}[theorem]
\newtheorem{lemma}[theorem]{Lemma}
\theoremstyle{definition}
\newtheorem{definition}[theorem]{Definition}
\newtheorem{rough}[theorem]{Rough Definition}
\title{Generalized Gradient Descent is a Hypergraph Functor}
\author{Tyler Hanks \qquad\qquad James Fairbanks
\institute{Department of Computer and Information Science and Engineering \\
University of Florida\\ Gainesville, Florida}
\email{t.hanks@ufl.edu \quad\qquad fairbanksj@ufl.edu}
\and
Matthew Klawonn 
\institute{Information Directorate \\ Air Force Research Lab \\ Rome, New York}
\email{matthew.klawonn.2@us.af.mil}
}
\begin{document}

\maketitle

\begin{abstract}



    Cartesian reverse derivative categories (CRDCs) provide an axiomatic generalization of the reverse derivative, which allows generalized analogues of classic optimization algorithms such as gradient descent to be applied to a broad class of problems.
    In this paper, we show that generalized gradient descent with respect to a given CRDC induces a hypergraph functor from a hypergraph category of optimization problems to a hypergraph category of dynamical systems. The domain of this functor consists of objective functions that are 1) general in the sense that they are defined with respect to an arbitrary CRDC, and 2) open in that they are decorated spans that can be composed with other such objective functions via variable sharing. The codomain is specified analogously as a category of general and open dynamical systems for the underlying CRDC. We describe how the hypergraph functor induces a distributed optimization algorithm for arbitrary composite problems specified in the domain. To illustrate the kinds of problems our framework can model, we show that parameter sharing models in multitask learning, a prevalent machine learning paradigm, yield a composite optimization problem for a given choice of CRDC. We then apply the gradient descent functor to this composite problem and describe the resulting distributed gradient descent algorithm for training parameter sharing models.
    
    
    
\end{abstract}
\section{Introduction}\label{sec:intro}

\begin{figure}\label{fig:mtl}
    \centering
    \includegraphics[width=\textwidth]{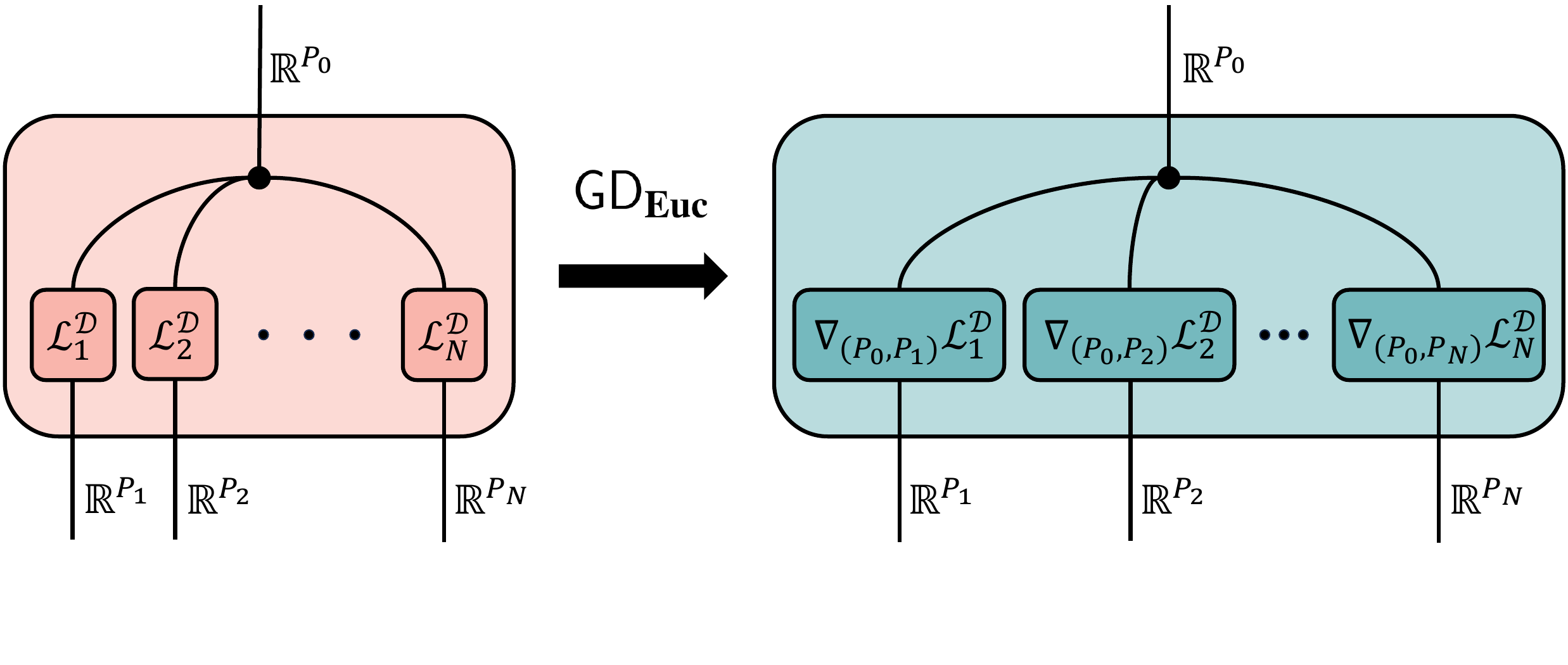}
    \caption{An illustration of multitask learning in our framework. The left string diagram shows the composite objective function representing the multitask learning objective. The right string diagram is the result of applying the gradient descent functor to the morphism on the left, resulting in a distributed optimization algorithm.
    }
    \label{fig:string_diagram}
\end{figure}

Cartesian reverse derivative categories (CRDCs) provide an axiomatic generalization of the reverse derivative and have been used to define generalized analogues of classic optimization algorithms, such as gradient descent, for minimizing generalized objective functions \cite{cruttwell2022categorical, shiebler_generalized_2022}. These developments allow techniques from machine learning such as backpropagation to be applied not only to artificial neural networks, but to a broad class of models including boolean circuits. 
The generality of the CRDC framework makes future expansion to other problem types likely. 
Central to the definition of a CRDC is the Cartesian reverse derivative combinator $\rev$, which must obey axioms mirroring the behavior of the standard directional derivative in the Euclidean domain. This combinator is \emph{compositional} in that it satisfies a chain rule, allowing one to define a general version of the backpropagation algorithm \cite{rumelhart1986learning}.

In prior work, CRDCs have been used primarily to compose parameterized morphisms, i.e to build a learning model, all while leveraging $\rev$ to ensure the parameters can be updated with respect to the generalized gradient of an objective. In this way, the induced optimization problem of maximizing/minimizing the given objective with respect to the parameters has a compositional structure by virtue of the \emph{model} being compositional. We propose that in addition to compositional structure in the \emph{model}, compositional structure in the \emph{objective} is also of interest. In particular, it is common in machine learning for parameters to be optimized for more than one objective. Simple examples of compositional objectives are those arising from various regularization methods, for example $\ell_1$ or $\ell_2$ penalties \cite{bishop2006pattern}, wherein one term in the objective incentivizes ``good performance" on the given task while the other enforces some generically desirable property like sparseness. Other examples with more sophisticated composition patterns include example- and feature-parallel distributed learning setups \cite{boyd_distributed_2010}, and parameter sharing methods wherein subsets of parameters may be optimized for one or more objective functions. Parameter sharing arises within the setting of multitask learning, which we will discuss in this paper. Figure \ref{fig:mtl} illustrates this example. Our goal in this paper is to formalize compositional objective structure and its exploitation via gradient descent in the general setting of CRDCs.

To capture composition of objective functions that may have shared parameters, we turn to decorated spans and hypergraph categories, which are useful for formalizing mathematical objects that compose in an undirected, networked fashion \cite{fong2016algebra}. Our approach closely follows that of \cite{hanks2024compositional}, where algebras of undirected wiring diagrams are used to compose objective functions with shared decision variables, and a variety of first-order methods including (sub)gradient descent and primal-dual methods are shown to be functorial in the standard Euclidean domain. This paper extends the results on gradient descent to the general setting of Cartesian reverse derivative categories and generalized optimization introduced in \cite{shiebler_generalized_2022}. Specifically, our contributions are as follows.

\begin{itemize}
    \item We show how to produce hypergraph categories $\OptC$ of generalized open objectives and $\DynamC$ of optimizers over a given CRDC $\CC$ and optimization domain $(\CC,R)$.
    \item We prove that generalized gradient descent yields a hypergraph functor
    \[
    \GD\maps \OptC\to \DynamC
    \]
    from open objectives to open optimizers.
    \item We highlight how functoriality allows the gradient descent solution algorithms for composite problems to be implemented in a distributed fashion.
    \item We show that multitask learning \cite{caruana1997multitask} with hard parameter sharing is an example of objective composition occurring in the machine learning literature, and we leverage our functorial formulation of gradient descent to recover a distributed training scheme.
\end{itemize}

After providing the necessary background on CRDCs, we will begin by setting up the categories of open generalized objectives and open dynamical systems. We then specify the generalized gradient descent functor between them before turning to the multitask learning example.

\section{Preliminaries}

In this section, we recall the necessary definitions and concepts from Cartesian reverse derivative categories \cite{cockett2019reverse} and generalized optimization \cite{shiebler_generalized_2022}. 

\begin{definition}[Definition 3.1 in \cite{shiebler_generalized_2022}]
    A \define{Cartesian left-additive category} is a Cartesian category $\CC$ with terminal object $*$ in which for any pair of objects $X,Y$, the hom-set $\CC(X,Y)$ is a commutative monoid with addition operation $+$ and additive identities $0_{X,Y}\maps X\to Y$. In addition, the following axioms must be satisfied.
    \begin{itemize}
        \item \textbf{LA.1} For any morphisms $h\maps X\to Y$ and $f,g\maps Y\to Z$, we have
        \begin{equation}
            (f+g)\circ h = (f\circ h) + (g\circ h)\maps X\to Z\text{  and  } 0_{Y,Z}\circ h = 0_{X,Z}\maps X\to Z.
        \end{equation}
        \item \textbf{LA.2} For any projection map $\pi_i\maps Y\to Z$ and morphisms $f,g\maps X\to Y$ we have
        \begin{equation}
            \pi_i\circ(f+g) = (\pi_i\circ f) + (\pi_i\circ g)\maps X\to Z\text{ and } \pi_i\circ 0_{X,Y} = 0_{X,Z}\maps X\to Z. 
        \end{equation}
    \end{itemize}
    The additive identity of $\CC(*, X)$ is denoted $0_X$.
\end{definition}

\begin{rough}[Definition 3.3 in \cite{shiebler_generalized_2022}]
    A \define{Cartesian differential category} is a Cartesian left-additive category $\CC$ equipped with a \define{Cartesian derivative combinator} $\fwd$ taking morphisms $f\maps X\to Y$ to morphisms $\fwd[f]\maps X\times X\to Y$. This combinator must satisfy several axioms requiring that it behave like a standard forward derivative.
\end{rough}

\begin{rough}[Definition 3.2 in \cite{shiebler_generalized_2022}]
    A \define{Cartesian reverse derivative category} is a Cartesian left-additive category $\CC$ equipped with a \define{Cartesian reverse derivative combinator} $\rev$ taking morphisms $f\maps X\to Y$ to morphisms $\rev[f]\maps X\times Y\to X$. This combinator must satisfy several axioms requiring that it behave like a standard reverse derivative. In particular, we will make use of the following axioms in this paper.
    \begin{itemize}
        \item \textbf{RD.1 } $\rev[f+g] = \rev[f] + \rev[g]$ and $\rev[0] = 0$.
        \item \textbf{RD.5 } $\rev[g\circ f]=\rev[f]\circ(\id\times\rev[g])\circ\langle\pi_0,\langle f\circ\pi_0,\pi_1\rangle\rangle$.
    \end{itemize}
\end{rough}
Intuitively, \textbf{RD.1} says that the reverse derivative combinator must be additive while \textbf{RD.5} is the chain rule cast in the abstract setting of CRDCs. Every Cartesian reverse derivative category (CRDC) is also a Cartesian differential category (Theorem 16 in \cite{cockett2019reverse}) by defining the derivative combinator of a morphism $f\maps X\to Y$ as
\begin{equation}
    \fwd[f] = \pi_1\circ \rev[\rev[f]]\circ(\langle\id_X,0_{X,Y}\rangle\times\id_X)\maps X\times X\to Y.
\end{equation}
This allows us to refer to the Cartesian derivative combinator of a CRDC. Of crucial importance in the subsequent sections is the notion of a linear map in a CRDC.

\begin{definition}
    A \define{linear map} in a CRDC $\CC$ is a morphism $f$ such that $\fwd[f]=f\circ\pi_1$. The linear maps in $\CC$ form a full subcategory of $\CC$ which we denote $\lin(\CC)$. By Proposition 24 in \cite{cockett2019reverse}, $\lin(\CC)$ is a $\dag$-category with finite $\dag$-biproducts. Moreover, $\rev[f]=f^\dag\circ\pi_1$ and $(f+g)^\dag = f^\dag + g^\dag$.
\end{definition}

There are two canonical examples of CRDCs in the literature.
\begin{enumerate}
    \item The CRDC $\Euc$ has Euclidean spaces as objects and smooth functions as morphisms. The terminal object is $\R^0$ and the reverse derivative of a smooth function $f\maps \R^m\to\R^n$ is 
    \[
        \rev[f](x,x')\coloneqq J_f(x)^Tx',
    \]
    where $J_f(x)$ is the Jacobian of $f$ evaluated at $x$. The linear maps in $\Euc$ are the usual linear maps between vector spaces, with the dagger structure corresponding to dual maps.
    \item Given a commutative ring $r$, the CRDC $\textbf{Poly}_r$ has natural numbers as objects. A morphism from $n$ to $m$ is an $m$-tuple of polynomials over $r$, each in $n$ variables. The terminal object is $0$ and the reverse derivative of a polynomial $P(x)=(p_1(x),\dots,p_m(x))$ is
    \[
        \rev[P](x,x') \coloneqq \Bigl(\sum_{i=1}^m\frac{\partial p_i}{\partial x_1}(x)x_i',\dots,\sum_{i=1}^m\frac{\partial p_i}{\partial x_n}(x)x_i'\Bigr),
    \]
    where $\frac{\partial p_i}{\partial x_j}(x)$ denotes the formal derivative of $p_i$ in $x_j$ at $x$. The linear maps are polynomials $P(x)=(p_1(x),\dots,p_m(x))$ where each $p_i(x)$ is of the form $p_i(x) = \sum_{i=1}^n r_ix_i$ for $r_i\in r$ \cite{cockett2019reverse}.
\end{enumerate}
The fundamental component for generalized optimization is the notion of an \emph{optimization domain}, which generalizes the role of the real numbers in defining the ``cost" or ``value" of a given configuration of decision variables.
\begin{definition}[Definition 3.4 in \cite{shiebler_generalized_2022}]\label{def:opt-dom}
    An \define{optimization domain} is a pair $(\CC,R)$ where $\CC$ is a CRDC and $R$ is an object of $\CC$ equipped with the following additional structures.
    \begin{itemize}
        \item Each morphism $f\maps X\to Y$ in $\CC$ has an additive inverse $-f$.
        \item Each hom-set $\CC(*,X)$ out of the terminal object is equipped with a multiplication operation $fg$ and a multiplicative identity $1_X\maps *\to X$ to form a commutative ring with the left additive structure.
        \item The hom-set $\CC(*,R)$ is totally ordered to form an ordered commutative ring.
    \end{itemize}
    Given a unique map $!_X\maps X\to *$ into the terminal object, the map $1_Y\circ !_X\maps X\to Y$ is denoted $1_{X,Y}$.
    An \define{objective} in $(\CC,R)$ is a morphism $\ell\maps X\to R$ in $\CC$.
\end{definition}

\begin{definition}[Definition 3.6 in \cite{shiebler_generalized_2022}]
    Given an optimization domain $(\CC,R)$, the \define{generalized gradient} of an objective $\ell\maps X\to R$ is $\rev[\ell]_1\maps X\to X$ defined by
    \begin{equation}
        \rev[\ell]_1\coloneqq \rev[\ell]\circ\langle\id_X,1_{X,R}\rangle.
    \end{equation}
\end{definition}

The following are examples given in \cite{shiebler_generalized_2022} of optimization domains and generalized gradients.
\begin{enumerate}
    \item The \define{standard domain} is $(\Euc,\R)$. Objectives are smooth functions $\ell\maps\R^n\to\R$ and the gradient of $\ell\maps\R^n\to\R$ is $\nabla\ell\maps\R^n\to\R^n$.
    \item The \define{$r$-polynomial domain} is $(\textbf{Poly}_r, 1)$. Objectives are polynomials $\ell\maps n\to 1$ and the gradient of $\ell\maps n\to 1$ is $\Bigl(\frac{\partial\ell}{\partial x_1}(x),\dots,\frac{\partial\ell}{\partial x_n}(x)\Bigr)$, where these are again formal derivatives.
\end{enumerate}

\section{Generalized Gradient Descent as a Hypergraph Functor}\label{sec:GGD}
To construct hypergraph categories of objectives and optimizers, we will require that the subcategory of linear maps in $\CC$, denoted $\lin(\CC)$, has finite limits. Going forward, we use $\times$ to denote the product and $*$ to denote the terminal object in an arbitrary CRDC. Proofs of theorems are given in the appendix.

\subsection{The Hypergraph Category of Open Objectives}

The goal of this section is to define a hypergraph category of generalized optimization problems. Recall that a hypergraph category is a symmetric monoidal category where every object is equipped with the structure of a \emph{special commutative Frobenius monoid} \cite{fong_hypergraph_2019}. In particular, this means that every object has the structure of both a commutative monoid and a cocommutative comonoid and satisfies two additional axioms to ensure that these structures cohere properly. It can be further shown from these axioms that every hypergraph category is self-dual compact closed. Taken together, these properties yield a category where morphisms can compose in an undirected fashion according to the interconnection pattern of arbitrary hypergraphs (hence the name hypergraph categories). Another way of saying this is that hypergraph categories are symmetric monoidal categories whose string diagrams (introduced in \cite{JOYAL199320}) are hypergraphs. We now turn to how this structure is realized in optimization problems.

In standard optimization, a generic unconstrained minimization problem has the form
\[
    \text{minimize } f(x),
\]
where $f\maps\R^n\to\R$ is known as the objective function, and $x\in\R^n$ is referred to as the decision or optimization variable. When $f(x)$ has the form $\sum_{i=1}^N f_i(x_i)$, where the $x_i$ are disjoint subvectors of $x$, $f$ is referred to as \emph{separable}. Separable objectives can be solved entirely in parallel as each optimal $x_i$ has no effect on the rest of the objective. However, in many important problems, the objective function is \emph{partially separable}. For example, the problem
\[
    \text{minimize } f(w,x) + g(u,w,y) + h(u,w,z)
\]
consists of a sum of objectives with \emph{complicating} variables $u$ and $w$. Such a sum of objectives with shared variables is the type of compositional structure that we wish to capture and generalize in our hypergraph category of optimization problems.

A standard method for defining such a hypergraph category is to leverage the theory of decorated spans\footnote{The literature typically uses \emph{covariant} decorating functors to define decorated cospan categories. However, all the same results hold for the dual construction of \emph{contravariant} decorating functors and decorated spans, which we leverage in this paper.} \cite{fong2015decorated, fong2016algebra}. Decorated spans turn a closed system into an open system by specifying a boundary for the system in addition to morphisms that relate the domain of the system to its boundary. We now leverage this general pattern to define a notion of \emph{open objectives}.

\begin{definition}
    Given an optimization domain $(\CC,R)$, an \define{open objective} with domain boundary $X$ and codomain boundary $Y$ consists of a span $X\leftarrow N\rightarrow Y$ in $\lin(\CC)$ together with a choice of objective on $N$, i.e., a map $f\maps N\to R$ in $\CC$. This is summarized by the diagram
\[\begin{tikzcd}
	& R \\
	X & N & Y.
	\arrow["f", from=2-2, to=1-2]
	\arrow["l"', from=2-2, to=2-1]
	\arrow["r", from=2-2, to=2-3]
\end{tikzcd}\]
\end{definition}

There is a straightforward interpretation of open objectives when the legs are both projections: the boundary objects are subobjects of the overall decision space which can be shared with other objectives. This intuition of sharing parts of the decision space is still helpful even when the legs are general linear maps. The theory of decorated spans provides a recipe for constructing hypergraph categories of such open systems by specifying a lax symmetric monoidal functor from a finitely complete category into $\Set$, known as a \emph{decorating functor}. We thus arrange open objectives into a hypergraph category by specifying the following decorating functor.

\begin{restatable}{theorem}{obj}\label{thm:obj}
    Given an optimization domain $(\CC,R)$, there is a contravariant lax symmetric monoidal functor $O_\CC^R\maps (\lin(\CC)\op,\times)\to (\Set,\times)$ defined by the following maps (where we let $O\coloneqq O_\CC^R$ for the remainder of the theorem and proof).
    \begin{itemize}
        \item Given an object $X$, $O(X)$ is the set of generalized objectives with domain $X$, i.e., the hom-set $\CC(X,R)$.
        \item Given a linear map $\phi\maps X\to Y$ and objective $f\maps Y\to R$, $O(\phi)(f)\maps X\to R$ is the objective $f\circ \phi$ obtained by precomposition.
        \item Given objects $X,Y$, the product comparison $\varphi_{X,Y}\maps O(X)\times O(Y)\to O(X\times Y)$ is defined by
        \begin{equation}
            \varphi_{X,Y}(f,g)\coloneqq f\circ \pi_0 + g\circ \pi_1,
        \end{equation}
        where $\pi_0\maps X\times Y\to X$ and $\pi_1\maps X\times Y\to Y$ are the natural projections.
        \item The unit comparison $\varphi_0\maps \1\to O(*)$ selects the additive identity $0_R$ for the hom-set $\CC(*,R)$ supplied by the left additive structure.
    \end{itemize}
\end{restatable}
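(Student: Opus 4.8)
The plan is to verify in turn the three kinds of data that a contravariant lax symmetric monoidal functor must satisfy: functoriality of $O$, naturality of the product comparison $\varphi$, and the coherence axioms (associativity, unitality, and symmetry) relating $\varphi$ and $\varphi_0$ to the monoidal structures of $(\lin(\CC)\op,\times)$ and $(\Set,\times)$. Throughout, the two structural ingredients I expect to do all the real work are (i) the universal property of products in $\lin(\CC)$, used in the form $\pi_0\circ(\phi\times\psi)=\phi\circ\pi_0$ and $\pi_1\circ(\phi\times\psi)=\psi\circ\pi_1$, and (ii) the left additive structure of $\CC$, under which precomposition preserves the commutative monoid structure on hom-sets, i.e. $(a+b)\circ m = a\circ m + b\circ m$ and $0\circ m = 0$. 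I will also rely on the standard fact for CRDCs that products of linear maps are computed as in $\CC$, so that these projection identities hold at the level of the $\CC$-morphisms $f\maps X\to R$ in which the objectives live.

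First I would dispatch functoriality, which is immediate: $O(\id_X)(f)=f\circ\id_X=f$, and for $\phi\maps X\to Y$, $\psi\maps Y\to Z$, and $h\maps Z\to R$ we have $O(\phi)(O(\psi)(h))=(h\circ\psi)\circ\phi=h\circ(\psi\circ\phi)=O(\psi\circ\phi)(h)$ by associativity of composition, which also confirms the contravariance. Well-definedness is equally quick: since every linear map is in particular a $\CC$-morphism, $f\circ\phi$ is a genuine element of $\CC(X,R)=O(X)$.

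The substantive step is naturality of $\varphi$. Here I would fix linear maps $\phi\maps X\to X'$ and $\psi\maps Y\to Y'$ and check that $O(\phi\times\psi)\circ\varphi_{X',Y'}=\varphi_{X,Y}\circ(O(\phi)\times O(\psi))$ as maps $O(X')\times O(Y')\to O(X\times Y)$. Evaluating the left-hand side on $(f,g)$ gives $(f\circ\pi_0+g\circ\pi_1)\circ(\phi\times\psi)$, with $\pi_0,\pi_1$ the projections of $X'\times Y'$; distributing the precomposition over the sum by left additivity and then applying the projection identities turns this into $(f\circ\phi)\circ\pi_0+(g\circ\psi)\circ\pi_1$ (now with projections from $X\times Y$), which is exactly $\varphi_{X,Y}(f\circ\phi,g\circ\psi)$, the right-hand side. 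This is the step I expect to be the main obstacle, not because any single manipulation is hard, but because it is the one place where the contravariant product action, the interchange of projections with $\phi\times\psi$, and the left additive structure must all be lined up correctly; in particular the argument would break if precomposition did not distribute over $+$, so it is essential that $\phi\times\psi$ appear on the right of the composite.

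Finally I would verify the coherence axioms, all of which reduce to the commutative monoid axioms for $+$ together with the product coherences. Associativity follows because both composites in the associativity square send $(f,g,h)$ to $f\circ\pi_0+g\circ\pi_1+h\circ\pi_2$ with projections from the triple product, the two bracketings agreeing by associativity of $+$; symmetry follows from commutativity of $+$, since the braiding merely swaps the summands $f\circ\pi_0$ and $g\circ\pi_1$; and unitality follows because $\varphi_0$ selects $0_R$ and $0\circ m=0$, so the summand contributed by the unit object vanishes and leaves the unitor. Having checked functoriality, naturality, and these coherences, I would conclude that $O$ is a contravariant lax symmetric monoidal functor as claimed.
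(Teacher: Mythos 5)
Your proposal is correct and follows essentially the same route as the paper: functoriality is dispatched as immediate, naturality of $\varphi$ is the substantive step handled via \textbf{LA.1} (precomposition distributes over $+$) together with the compatibility of projections with $\phi\times\psi$, and the coherence diagrams are reduced to the commutative monoid structure on hom-sets. The only cosmetic difference is that the paper finishes the naturality computation with generalized elements, where you stay point-free using the identities $\pi_0\circ(\phi\times\psi)=\phi\circ\pi_0$ and $\pi_1\circ(\phi\times\psi)=\psi\circ\pi_1$; these are equivalent formulations of the same argument.
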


We can now use our decorated spans of open objectives to construct a hypergraph category.

\begin{corollary}[Open Generalized Optimization Problems] \label{cor:Obj} 
    Given an optimization domain $(\CC,R)$ such that $\lin(\CC)$ has finite limits, there is a hypergraph category $\OptC$ defined by the following data.
    \begin{itemize}
        \item Objects are the same as those of $\CC$.
        \item Morphisms are (isomorphism classes of) open generalized objectives.
        \item Given two open objectives $(X\xleftarrow{l_1} N\xrightarrow{r_1}Y, f\in O_\CC^R(N))$ and $(Y\xleftarrow{l_2}M\xrightarrow{r_2}Z, g\in O_\CC^R(M))$, their composite consists of the following span computed by pullback
        \begin{equation}\label{eq:pullback}
\begin{tikzcd}
	&& {N\times_YM} \\
	\\
	& N & {N\times M} & M \\
	X && Y && Z
	\arrow["{l_1}", from=3-2, to=4-1]
	\arrow["{r_1}"', from=3-2, to=4-3]
	\arrow["{l_2}", from=3-4, to=4-3]
	\arrow["{r_2}"', from=3-4, to=4-5]
	\arrow["{\pi_1}", from=3-3, to=3-4]
	\arrow["{\pi_0}"', from=3-3, to=3-2]
	\arrow["{b_1}", curve={height=-12pt}, from=1-3, to=3-4]
	\arrow["{b_0}"', curve={height=12pt}, from=1-3, to=3-2]
	\arrow["{\langle b_0,b_1\rangle}"{description}, dashed, from=1-3, to=3-3]
\end{tikzcd}
    \end{equation}
    together with the objective obtained from the composite morphism
    \begin{equation}
        \1\cong \1\times \1\xrightarrow{f\times g}O_\CC^R(N)\times O_\CC^R(M)\xrightarrow{\varphi_{N,M}}O_\CC^R(N\times M)\xrightarrow{O_\CC^R\langle b_0,b_1\rangle }O_\CC^R(N\times_Y M).
    \end{equation}
    \item The identity on $X$ is the pair $(X=X=X, \1\xrightarrow{\varphi_0}O_\CC^R(*)\xrightarrow{O_\CC^R(!)}O_\CC^R(X))$.
    \item The monoidal product on objects is their product in $\CC$ while the monoidal product of morphisms $(X\leftarrow N\to Y, f)$ and $(W\leftarrow M\to Z, g)$ is
    \begin{equation}\label{eq:obj_mon}
        (X\times W\leftarrow N\times M\to Y\times Z, \varphi_{N,M}(f,g)).
    \end{equation}
    \item The hypergraph maps are inherited from those of $\mathrm{Span}(\lin(\CC))$ together with the identity decoration.
    \end{itemize}
\end{corollary}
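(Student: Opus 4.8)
The plan is to recognize this corollary as a direct instance of the general decorated span construction, so that the bulk of the work has already been carried out in Theorem \ref{thm:obj}. Concretely, I would exhibit $\OptC$ as the decorated span category built from the finitely complete category $\lin(\CC)$ together with the decorating functor $O_\CC^R$, and then check that the abstract construction specializes to the explicit data listed in the statement.

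First I would recall the dual of Fong's decorated cospan theorem \cite{fong2015decorated, fong2016algebra}: given a category $\mathcal{A}$ with finite limits and a contravariant lax symmetric monoidal functor $F\maps (\mathcal{A}\op,\times)\to(\Set,\times)$, the decorated spans assemble into a hypergraph category whose objects are those of $\mathcal{A}$, whose morphisms $X\to Y$ are isomorphism classes of spans $X\leftarrow N\to Y$ equipped with a decoration $d\in F(N)$, with composition given by pullback and the decorations combined through the lax structure of $F$. Both hypotheses hold in our setting: $\lin(\CC)$ has finite limits by assumption, so in particular pullbacks exist and the undecorated $\mathrm{Span}(\lin(\CC))$ is a hypergraph category; and $O_\CC^R$ is a contravariant lax symmetric monoidal functor by Theorem \ref{thm:obj}. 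Instantiating the theorem with $\mathcal{A}=\lin(\CC)$ and $F=O_\CC^R$ thus yields a hypergraph category, which we name $\OptC$.

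It then remains to match the general construction against the displayed data. The object set and the identification of morphisms with (isomorphism classes of) decorated spans are immediate. For composition, the construction forms the apex pullback $N\times_Y M$ of \eqref{eq:pullback} and decorates it by transporting the combined decoration $\varphi_{N,M}(f,g)\in O_\CC^R(N\times M)$ along the universal map $\langle b_0,b_1\rangle$, i.e.\ by applying $O_\CC^R\langle b_0,b_1\rangle$, which is exactly the composite of sets given in the statement. The identity on $X$ is the trivial span $X=X=X$ decorated by the image of the unit comparison $\varphi_0$ under $O_\CC^R(!)$; the monoidal product on objects is the product in $\CC$ and on morphisms combines the two spans componentwise with decorations merged by $\varphi_{N,M}$, reproducing \eqref{eq:obj_mon}; and the hypergraph (special commutative Frobenius) maps are inherited from $\mathrm{Span}(\lin(\CC))$ decorated trivially via $\varphi_0$.

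The only genuine content beyond invoking the theorem is confirming its hypotheses, and both are discharged above: finiteness of limits gives well-defined pullback composition on isomorphism classes together with the Frobenius structure on the underlying spans, while the lax monoidal naturality and coherence of $O_\CC^R$ established in Theorem \ref{thm:obj} ensure that decorations compose associatively and respect the monoidal product. Consequently I expect the main obstacle to lie not in this corollary but in Theorem \ref{thm:obj}, which it cites; the remaining task here is the bookkeeping of aligning the abstract decorated span data with the explicit formulas, which the steps above accomplish.
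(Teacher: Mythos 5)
Your proposal is correct and matches the paper's own proof, which likewise just applies the decorated span construction of \cite{fong2015decorated} (Proposition 3.2 and Theorem 3.4, dualized to contravariant decorating functors) to Theorem \ref{thm:obj} under the finite-limits hypothesis on $\lin(\CC)$. The additional bookkeeping you do to match the abstract construction against the explicit data is fine but not required beyond what the paper states.
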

\begin{proof}
    This follows by applying Proposition 3.2 and Theorem 3.4 in \cite{fong2015decorated} to Theorem \ref{thm:obj}.
\end{proof}

\subsection{The Hypergraph Category of Open Optimizers}
Having defined a hypergraph category of open objective functions, we will now use the same machinery of decorated spans to define a hypergraph category of open dynamical systems over a given CRDC. 

\begin{definition}
    Given a CRDC $\CC$, an \define{open dynamical system} with domain boundary $X$ and codomain boundary $Y$ consists of a span $X\leftarrow N\rightarrow Y$ in $\lin(\CC)$ together with a choice of system on $N$, i.e., a map $v\maps N\to N$ in $\CC$.
\end{definition}

Endomaps in a CRDC are also referred to as optimizers in \cite{shiebler_generalized_2022}, thus we use the terms optimizer and dynamical system interchangeably. Similar to open objectives, the boundaries of open dynamical systems specify which parts of a system's state space can be influenced by other systems. The decorating functor is given as follows.

\begin{restatable}{theorem}{opt}\label{thm:opt}
    Given a CRDC $\CC$, there is a contravariant lax symmetric monoidal functor \\$D_\CC\maps (\lin(\CC)\op,\times)\to (\Set,\times)$ defined by the following maps (where we let $D\coloneqq D_\CC$ for the remainder of the theorem and proof).
    \begin{itemize}
        \item Given an object $X$, $D(X)$ is the set of generalized dynamical systems with state space $X$, i.e., the hom-set $\CC(X,X)$.
        \item Given a linear map $\phi\maps X\to Y$ and a system $v\maps Y\to Y$, $D(\phi)(v)$ is the system $\phi^\dag\circ v\circ \phi\maps X\to X$.
        \item Given objects $X,Y$, the product comparison $\varphi_{X,Y}\maps D(X)\times D(Y)\to D(X\times Y)$ is defined by
        \begin{equation}
            \varphi_{X,Y}(v,w)\coloneqq \pi_0^\dag\circ v\circ \pi_0 + \pi_1^\dag\circ w\circ \pi_1.
        \end{equation}
        \item The unit comparison $\varphi_0\maps \1\to D(*)$ is uniquely determined as $\CC(*,*)$ is singleton.
    \end{itemize}
\end{restatable}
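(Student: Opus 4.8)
The plan is to follow the same skeleton as the proof of Theorem~\ref{thm:obj}, verifying functoriality and then the three lax symmetric monoidal coherences, but the new subtlety is that $D$ is not merely a restricted hom functor: its action $D(\phi)(v) = \phi^\dag\circ v\circ\phi$ conjugates by a linear map and its transpose, so functoriality must be proved rather than read off. First I would check that $D$ is a contravariant functor on $\lin(\CC)$. Identities are immediate from $\id_X^\dag = \id_X$, giving $D(\id_X)(v) = \id_X\circ v\circ\id_X = v$. For composition, given $\phi\maps X\to Y$, $\psi\maps Y\to Z$ in $\lin(\CC)$ and $v\maps Z\to Z$, I would compute
\[
    D(\phi)\bigl(D(\psi)(v)\bigr) = \phi^\dag\circ\psi^\dag\circ v\circ\psi\circ\phi = (\psi\circ\phi)^\dag\circ v\circ(\psi\circ\phi) = D(\psi\circ\phi)(v),
\]
so that the contravariance of $D$ rests squarely on the contravariant functoriality of the transpose, namely $(\psi\circ\phi)^\dag = \phi^\dag\circ\psi^\dag$ and $\id^\dag=\id$, which I would cite as the defining properties of transposition of linear maps in a CRDC.

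For naturality of the product comparison I would establish commutativity of the square of Theorem~\ref{thm:obj} with $D$ substituted for $O$, for linear maps $\phi\maps X'\to X$ and $\psi\maps Y'\to Y$. Writing $p_0,p_1$ for the projections out of $X\times Y$ and $\pi_0,\pi_1$ for those out of $X'\times Y'$, the top path sends $(v,w)$ to
\[
    \pi_0^\dag\circ\phi^\dag\circ v\circ\phi\circ\pi_0 + \pi_1^\dag\circ\psi^\dag\circ w\circ\psi\circ\pi_1.
\]
For the bottom path I would distribute the conjugation $(\phi\times\psi)^\dag\circ(-)\circ(\phi\times\psi)$ across the sum $\varphi_{X,Y}(v,w) = p_0^\dag\circ v\circ p_0 + p_1^\dag\circ w\circ p_1$, using $\textbf{LA.1}$ for precomposition by $\phi\times\psi$ and the additivity of the linear map $(\phi\times\psi)^\dag$ for postcomposition. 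Each resulting summand then simplifies by the product identity $p_0\circ(\phi\times\psi) = \phi\circ\pi_0$ together with contravariant functoriality of the transpose, $(\phi\times\psi)^\dag\circ p_0^\dag = (p_0\circ(\phi\times\psi))^\dag = \pi_0^\dag\circ\phi^\dag$, so the first term becomes $\pi_0^\dag\circ\phi^\dag\circ v\circ\phi\circ\pi_0$ and matches the top path term-by-term, and symmetrically for the second.

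Finally I would discharge the symmetry, associativity, and unitality coherences. As in the objectives case these reduce to compatibility of the comparison maps with the symmetric monoidal structure and to the commutative-monoid structure on the hom-sets $\CC(-,-)$ supplied by the left additive enrichment; the unit comparison is forced because $\CC(*,*)$ is a singleton. The extra ingredient here, absent in Theorem~\ref{thm:obj}, is that these arguments additionally require the transpose to be compatible with the biproduct structure, carrying projections to the corresponding injections and commuting with the symmetry and associativity isomorphisms. This is exactly where I expect the main obstacle to lie: whereas functoriality and monoidality of $O$ were automatic, every step for $D$ leans on $(-)^\dag$ being an involutive, product-compatible contravariant functor, so the real content is assembling these properties of the transpose from the CRDC axioms (equivalently, from the reverse derivative of linear maps) rather than any new combinatorial manipulation.
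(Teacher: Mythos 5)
Your proposal is correct and follows essentially the same route as the paper's proof: functoriality from the contravariant functoriality of $(-)^\dag$, naturality of the product comparison by distributing the conjugation $(\phi\times\psi)^\dag\circ(-)\circ(\phi\times\psi)$ across the sum via \textbf{LA.1}, additivity of linear maps, and the product identities $\pi_i\circ(\phi\times\psi)=\phi\circ\pi_i$ (the paper's Lemma~\ref{lem:helpers}), and the coherences discharged by the commutative monoid structure on hom-sets together with the terminality of $D(*)$. If anything, your write-up is slightly more careful than the paper's in noting that the symmetry and associativity coherences also lean on the transpose's compatibility with the biproduct structure, a point the paper glosses over.
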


This is a generalization of the dynamics decorating functor defined in \cite{baez2017compositional}. We can now define an analogous hypergraph category of open generalized dynamical systems.

\begin{corollary}[Open Generalized Dynamics]
    Given a CRDC $\CC$ such that $\lin(\CC)$ has finite limits, there is a hypergraph category $\DynamC$ defined by the following data.
    \begin{itemize}
        \item Objects are the same as those of $\CC$.
        \item Morphisms are (isomorphism classes of) open dynamical systems.
        \item Given two open systems $(X\xleftarrow{l_1} N\xrightarrow{r_1}Y, v\in D_\CC(N))$ and $(Y\xleftarrow{l_2}M\xrightarrow{r_2}Z, w\in D_\CC(M))$, their composite consists of the pullback span as in \eqref{eq:pullback} together with the system obtained from the composite morphism
    \begin{equation}
        \1\cong \1\times \1\xrightarrow{v\times w}D_\CC(N)\times D_\CC(M)\xrightarrow{\varphi_{N,M}}D_\CC(N\times M)\xrightarrow{D_\CC\langle b_0,b_1\rangle }D_\CC(N\times_Y M).
    \end{equation}
    \item The identity on $X$ is the pair $(X=X=X, \1\xrightarrow{\varphi_0}D_\CC(*)\xrightarrow{D_\CC(!)}D_\CC(X))$.
    \item The monoidal product on objects is their product in $\CC$ while the monoidal product of morphisms $(X\leftarrow N\to Y, v)$ and $(W\leftarrow M\to Z, w)$ is
    \begin{equation}
        (X\times W\leftarrow N\times M\to Y\times Z, \varphi_{N,M}(v,w)).
    \end{equation}
    \item The hypergraph maps are inherited from those of $\mathrm{Span}(\lin(\CC))$ together with the identity decoration.
    \end{itemize}
\end{corollary}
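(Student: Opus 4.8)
The plan is to obtain $\DynamC$ as the output of the general decorated-span construction applied to the decorating functor of Theorem \ref{thm:opt}, mirroring exactly the proof of Corollary \ref{cor:Obj}. The two inputs that this machine requires are a base category of spans that is itself a hypergraph category, and a lax symmetric monoidal functor out of it supplying the decorations; I would verify each in turn and then cite the general theorem.

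First I would check the base category. Span composition is computed by pullback, so to make $\mathrm{Span}(\lin(\CC))$ into a well-defined category I need pullbacks in $\lin(\CC)$, which are supplied by the standing assumption that $\lin(\CC)$ has finite limits. Equipped with the Cartesian structure $(\lin(\CC),\times)$, the category $\mathrm{Span}(\lin(\CC))$ becomes a hypergraph category in the standard way, with the hypergraph/Frobenius maps built from the diagonal and projection maps of the products. Second, Theorem \ref{thm:opt} already establishes that $D_\CC\maps(\lin(\CC)\op,\times)\to(\Set,\times)$ is a contravariant lax symmetric monoidal functor, with product comparison $\varphi_{N,M}$ and unit comparison $\varphi_0$; this is precisely the data a decorating functor must supply.

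With both inputs in hand, I would invoke Proposition 3.2 and Theorem 3.4 of \cite{fong2015decorated}, in their contravariant span-decorating form, to conclude that the decorated spans for $D_\CC$ assemble into a hypergraph category. The general construction then automatically yields composition by pullback followed by transporting the two decorations along $\langle b_0,b_1\rangle$, monoidal product via $\varphi$, identities from $\varphi_0$, and hypergraph maps inherited from $\mathrm{Span}(\lin(\CC))$ with trivial decoration, matching the formulas in the statement. The remaining bookkeeping is to confirm that the general construction's composite decoration specializes to the displayed morphism $\1\cong\1\times\1\xrightarrow{v\times w}D_\CC(N)\times D_\CC(M)\xrightarrow{\varphi_{N,M}}D_\CC(N\times M)\xrightarrow{D_\CC\langle b_0,b_1\rangle}D_\CC(N\times_Y M)$.

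The only genuine subtlety, and the one place I would spend care, is the handedness: the cited theorems are stated for covariant decorated cospans, whereas here the decorating functor is contravariant and decorates spans. I would note that this is exactly the dual situation already invoked for Theorem \ref{thm:obj} and Corollary \ref{cor:Obj}, so the same dualized statements apply verbatim and no new verification beyond Theorem \ref{thm:opt} is required. Since Theorem \ref{thm:opt} has already discharged the substantive monoidal-functor axioms, the corollary reduces to this single application of the decorated-span theorem.
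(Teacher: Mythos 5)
Your proposal is correct and follows essentially the same route as the paper, whose proof is precisely the one-line application of Proposition 3.2 and Theorem 3.4 of \cite{fong2015decorated} to Theorem \ref{thm:opt}. The extra care you take with the contravariant/span dualization and the finite-limits hypothesis is sound, and the paper itself disposes of the handedness issue in the footnote accompanying its decorated-span setup.
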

\begin{proof}
    This follows by applying Proposition 3.2 and Theorem 3.4 in \cite{fong2015decorated} to Theorem \ref{thm:opt}.
\end{proof}

\subsection{Functoriality of Generalized Gradient Descent}
The pieces are now in place for our main result. We would like to show that one can functorially relate a composite objective function defined in $\OptC$ to a composite optimizer in $\DynamC$ that performs gradient descent on the given objective function. The framework of decorated spans allows the construction of such a hypergraph functor by specifying a monoidal natural transformation between the underlying decorating functors defining the domain and codomain hypergraph categories. Note that below we work with continuous gradient flow systems to simplify proofs. This is acceptable as gradient descent is equivalent to Euler's discretization method applied to a gradient flow system. Therefore, these results extend to discrete gradient descent systems because Euler's method is provably functorial in the Euclidean domain \cite{libkind_operadic_2022}, and this proof can be easily extended to the general CRDC domain.


\begin{restatable}{theorem}{ggd}\label{thm:ggd}
    Given an optimization domain $(\CC, R)$, there is a monoidal natural transformation \\$\gd(\CC)\maps O^R_\CC\Rightarrow D_\CC$ with components $\gd(\CC)_X\maps O_\CC^R(X)\to D_\CC(X)$ defined by the generalized gradient descent optimization scheme, i.e.,
    \begin{equation}
        f\mapsto -\rev[f]_1.
    \end{equation}
\end{restatable}

\begin{corollary}[Functoriality of Generalized Gradient Flow]\label{thm:GD}
    Given an optimization domain $(\CC,R)$ such that $\lin(\CC)$ has finite limits, there is an identity on objects hypergraph functor $\GD\maps\OptC\to \DynamC$ which takes an open objective $(X\xleftarrow{l}N\xrightarrow{r}Y, f\maps N\to R)$ to the open optimizer $(X\xleftarrow{l}N\xrightarrow{r}Y, -\rev[f]_1\maps N\to N)$.
\end{corollary}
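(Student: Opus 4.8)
The plan is to recognize that this corollary follows almost entirely from Theorem \ref{thm:ggd} together with the general functoriality of the decorated span construction, so that essentially no new per-morphism computation is required. Both $\OptC$ and $\DynamC$ are decorated span categories built over the \emph{same} base category $\mathrm{Span}(\lin(\CC))$, differing only in their decorating functors $O_\CC^R$ and $D_\CC$. The guiding principle from decorated span theory is that this assignment is itself functorial in the decorating data: a monoidal natural transformation between two decorating functors induces a hypergraph functor between the associated decorated span categories that is the identity on the underlying spans and acts on decorations componentwise. Thus the real content has already been discharged in Theorem \ref{thm:ggd}, where $\gd(\CC)$ was shown to be exactly such a monoidal natural transformation.

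Concretely, I would first invoke the companion result to Proposition 3.2 and Theorem 3.4 of \cite{fong2015decorated}: given decorating functors $F,G\maps(\lin(\CC)\op,\times)\to(\Set,\times)$ and a monoidal natural transformation $\alpha\maps F\Rightarrow G$, the assignment sending a decorated span $(X\leftarrow N\rightarrow Y, s)$ with $s\in F(N)$ to $(X\leftarrow N\rightarrow Y, \alpha_N(s))$ with $\alpha_N(s)\in G(N)$ is a well-defined identity-on-objects hypergraph functor. Applying this with $F=O_\CC^R$, $G=D_\CC$, and $\alpha=\gd(\CC)$ immediately yields a functor $\GD$ sending $(X\leftarrow N\rightarrow Y, f)$ to $(X\leftarrow N\rightarrow Y, \gd(\CC)_N(f))$, and by the definition of $\gd(\CC)$ this is precisely $(X\leftarrow N\rightarrow Y, -\rev[f]_1)$, matching the stated assignment.

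Second, I would indicate why each structural property holds, tracing every clause back to Theorem \ref{thm:ggd}. Preservation of composition requires that the pushforward of the $\varphi$-combined decoration along the pullback apex agree in the two categories; this follows by chasing the composite decoration of \eqref{eq:pullback} through the naturality square of $\gd(\CC)$ (comparing $O_\CC^R\langle b_0,b_1\rangle$ with $D_\CC\langle b_0,b_1\rangle$) and then through its product-comparison square (for $\varphi_{N,M}$). Preservation of identities and of the monoidal product follow in the same way from the unit-comparison and product-comparison squares, respectively. Finally, preservation of the hypergraph (special commutative Frobenius) maps is automatic: these maps are inherited from $\mathrm{Span}(\lin(\CC))$ equipped with the identity decoration, and since $\GD$ fixes the span part while $\gd(\CC)$ respects unit decorations, each Frobenius map is sent to the corresponding Frobenius map in $\DynamC$.

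The main obstacle is bookkeeping rather than mathematics: one must confirm that the hypothesis of the general decorated-span-functor theorem is genuinely a \emph{monoidal} natural transformation, and not merely a natural transformation, so that the unit and product comparisons are respected and the hypergraph structure transports correctly. Since Theorem \ref{thm:ggd} verified precisely the naturality square together with \emph{both} monoidal coherence squares for $\gd(\CC)$, this hypothesis is satisfied in full, and the corollary follows as a direct consequence with no additional computation.
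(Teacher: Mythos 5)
Your proposal is correct and matches the paper's proof exactly: the paper likewise dispatches the corollary in one line by applying the general result of \cite{fong2015decorated} (specifically its Theorem 4.1, the ``companion result'' you allude to) to the monoidal natural transformation $\gd$ established in Theorem \ref{thm:ggd}. Your additional paragraphs tracing composition, identities, and the hypergraph maps through the naturality and comparison squares are a sound but unnecessary re-derivation of what that cited theorem already guarantees.
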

\begin{proof}
    This follows by applying Theorem 4.1 of \cite{fong2015decorated} to the monoidal natural transformation $\gd$.
\end{proof}

Taking stock, there are a many implications of the categories and functor just defined. For one, corollary \ref{thm:GD} says we have the following equality given any composable pair $F\coloneqq(X\leftarrow N\to Y, f\maps N\to R),G\coloneqq(Y\leftarrow M\to Z, g\maps M\to R)$ of open objectives:
\begin{equation}
    \GD(G\circ F) = \GD(G)\circ\GD(F).
\end{equation}
In particular, this yields the following equivalence of open optimizers:
\begin{equation}\label{eq:optimizer_comp}
    -\rev[f\circ\pi_0\circ\phi + g\circ\pi_1\circ\phi]_1=-\phi^\dag\circ(\iota_0\circ\rev[f]_1\circ\pi_0 + \iota_1\circ\rev[g]_1\circ\pi_1)\circ\phi,
\end{equation}
where we are using $\phi$ to denote the pairing $\langle b_0,b_1\rangle$ from \eqref{eq:pullback}. Although these two optimizers are \emph{extensionally} equivalent (i.e., they compute the same output for a given input), they are not \emph{computationally} equivalent. In particular, the optimizer on the RHS of \eqref{eq:optimizer_comp} is far better suited to implementation in a distributed computing environment as it can be interpreted with message passing semantics. Specifically, the distribute step is given by applying the linear map $\phi$ to the input followed by the application of the projections to send the desired components to the subsystems. The parallel computation step is given by computing the gradients of each objective with respect to their current local values, and the collect step is given by injecting the results into a single vector and applying the linear map $\phi^\dag$.

Another benefit of open objectives and optimizers is the ability to easily specify composite objective functions using the graphical syntax of string diagrams. In the following section we will model hard parameter sharing for mulit-task learning in our category $\OptC$. We do so to show that, rather than existing as an exercise in abstraction, our framework is applicable to a prevalent machine learning paradigm. 

\section{Multitask Learning}
Multitask learning (MTL) \cite{caruana1997multitask} is a machine learning paradigm in which learning signals from different tasks are aggregated in order to train a single model. Precise definitions of what a task is vary, but for our purposes we will consider a task $\mathcal{T}_i \coloneqq (\mathcal{D}_i, \mathcal{L}_i)$ to be a pair of data set $\mathcal{D}_i$ and loss function $\mathcal{L}_i$. Usually (see \cite{zhang2021survey}) the tasks $\mathcal{T}_i$ are supervised, so $\mathcal{D}_i \coloneqq \mathcal{X}_i \times \mathcal{Y}_i$ for input data $\mathcal{X}_i$ and labels $\mathcal{Y}_i$. The primary (or at least original) motivation of MTL is to exploit additional learning signals for improved model performance on a given ``main" task, beyond what might be possible if the model were trained on any single task \cite{caruana1997multitask}.

A common approach (see \cite{zhang2021survey}) to multitask learning is to leverage parameterised models where at least some parameters are trained on all tasks. Such an approach is called \emph{hard parameter sharing} because the shared parameters are updated \emph{directly} using gradient information from multiple tasks. This is in contrast to \emph{soft parameter sharing} where the parameters are only \emph{penalized} for differing from counterparts trained on other tasks. Theoretically, hard parameter sharing helps to avoid overfitting of shared parameters \cite{baxter1997bayesian}. When the underlying model is a neural network, hard parameter sharing is often achieved by sharing the parameters of the first several layers while training a unique set of predictive final layers per task. However, recent work suggests that more flexible schemes may lead to improved performance \cite{zhang2022rethinking}. With this motivation, we will now frame hard parameter sharing as a composite optimization problem that allows arbitrary parameters to be shared amongst arbitrary tasks.

For each task $\mathcal{T}_i$ in a collection of $N$ tasks, consider a neural network $f_i$ that has task-specific parameters $P_i$ and shared parameters $P_0$. We will call an assignment of parameters to real numbers $W_i \in \R^{P_i}$ the \emph{weights} of a neural network, though note that the machine learning literature often uses the terms ``weights" and ``parameters" interchangeably. The composite optimization problem given by such a multitask learning setup (cf. \cite{sener2018multi}, (1)) is


\begin{equation}\label{eq:mtl_gd}
    \begin{array}{lll}
    \min\limits_{W_0, \ldots, W_N} & \sum_{i = 1}^{N} & \Li(f_i; W_0, W_i) \\
    \end{array}
\end{equation}

where
\begin{equation}
    \begin{array}{ll}
         \Li(f_i; W_0, W_i) & \coloneqq \frac{1}{|\mathcal{D}_i|} \sum_{d = 1}^{|\mathcal{D}^i|} \mathcal{L}_i(f_i(\mathcal{X}^d_i; W_0, W_i), \mathcal{Y}_i^d)  \\
    \end{array}
\end{equation}

with $f_i(\mathcal{X}_i^d; W_0, W_i)$ denoting the prediction of the network $f_i$ weighted by $(W_0, W_i)$ on input datum $\mathcal{X}_i^d$. We will now describe how the problem in \eqref{eq:mtl_gd} is a composite of open objectives, and therefore how to mechanically recover a distributed gradient descent algorithm for it.

\subsection{Compositional Formulation}
We will take our CRDC $\CC$ to be $\Euc$ with objects euclidean spaces $\R^N$ and morphisms $f: \R^N \rightarrow \R^M$ the smooth maps between spaces. The monoidal product is given by $\times$, the standard product of Euclidean spaces. We make this choice for consistency with the standard formulation above; however, we emphasize that our machinery is generic to the choice of CRDC. This means we could just as easily derive a distributed MTL training algorithm for some other non-standard CRDC. Now, each task's loss is a smooth map $\Li \maps \R^{P_i} \times \R^{P_0} \rightarrow \R$, meaning our optimization domain is $(\Euc, \R)$. Note that we have avoided some difficulty by treating each loss as an atomic map, as opposed to the composition of a neural network applied to data followed by a loss function. In the latter case, we would need to consider the set of parameters for the network in addition to the set of data points, and handle the subtleties of updating parameters while ignoring gradients w.r.t the input data. See \cite{cruttwell2022categorical} for a thorough discussion of such concerns. Because the loss $\Li$ is computed with respect to the entire data set $\mathcal{D}_i$ for a task, we have restricted ourselves to optimization methods that perform true gradient descent as opposed to the stochastic or mini-batch variants which are more common in the machine learning literature. We leave an extension of our work to the SGD setting for future efforts.

With the goal of specifying problem \eqref{eq:mtl_gd} as a morphism of $\OptEuc$, we first consider the following span that relates shared and task specific parameters for some $i$.

\begin{equation}\label{eq:li_span}
    \R^{P_0} \xleftarrow{\pi_0} \R^{P_0} \times \R^{P_i} \xrightarrow{\pi_1} \R^{P_i} 
\end{equation}

To promote \eqref{eq:li_span} to a decorated span, we need only to specify an objective function $\R^{P_0} \times \R^{P_i} \rightarrow \R$. With $\Li$ being the obvious choice, we construct a decorated span $F^i \coloneqq \left( \R^{P_0} \xleftarrow{\pi_0} \R^{P_0} \times \R^{P_i} \xrightarrow{\pi_1} \R^{P_i}, \Li \right)$. Each $F^i$ is a morphism in the hypergraph category $\OptEuc$, meaning that we may take the monoidal product $\bigotimes_{i = 1}^N F^i$. By Cor. \ref{cor:Obj}, specifically \eqref{eq:obj_mon}, we know that the monoidal product of our task specific decorated spans is as follows.

\begin{equation}
    \bigotimes_{i = 1}^N F^i = \left(\prod_{i = 1}^N \R^{P_0} \xleftarrow{\prod_{i=1}^N \pi^i_0} \prod_{i=1}^N (\R^{P_0} \times \R^{P_i}) \xrightarrow{\prod_{i=1}^N \pi_1^i} \prod_{i=1}^N \R^{P_i}, \varphi(\Ld_1, \ldots, \Ld_N) \right)
\end{equation}

Note that by Thm. \ref{thm:obj} the decorating objective function is given by the comparison map:
\begin{equation}
    \varphi(\Ld_1, \ldots, \Ld_N) \maps \prod_{i=1}^N O(\R^{P_0} \times \R^{P_i}) \rightarrow O(\prod_{i=1}^N \R^{P_0} \times \R^{P_i}),    
\end{equation}
where $\varphi(\Ld_1, \ldots, \Ld_N)$ maps the tuple of task specific losses to their sum:
\begin{equation}
    (\Ld_1, \ldots, \Ld_N) \mapsto \sum_{i=1}^N  \Li \circ \pi^i,
\end{equation}
where $\pi^i$ denotes the $i$'th projection of $\prod_{i=1}^N(\R^{P_0}\times\R^{P_i})$. We now wish to specify that the parameter space $\R^{P_0}$ is shared among all $N$ loss functions. This is accomplished as follows.

Recall that in a hypergraph category like $\OptEuc$, we have comultiplication maps 
\begin{equation}
    \delta_p \coloneqq \left( \R^p \xleftarrow{\id_p} \R^p \xrightarrow{\Delta} \R^p \times \R^p, 0 \maps \R^p \rightarrow \R \right),
\end{equation}
where $0\maps\R^p\to \R$ is the ``empty" decoration, i.e., the constant 0 objective function $0(v) = 0$. The map $\Delta\maps\R^p\to\R^p\times\R^p$ is the duplication map $v\mapsto (v,v)$ built-in to any Cartesian category. Thus, to copy the single parameter space $\R^{P_0}$ to all tasks, we simply apply $\delta_{P_0}$ $N$ times. We denote this $N$-fold application $\delta_{P_0}^N$. To recover the problem in \eqref{eq:mtl_gd}, all we must do is precompose our monoidal product of task specific objectives with the $N$-fold copy, yielding the morphism in $\OptEuc$

\begin{equation}\label{eq:mtl_morph}
    \begin{array}{ll}
         \Bigl( \left( \bigotimes_{i = 1}^N F^i \right) \circ \delta_{P_0}^N, O(\langle b_0, b_1 \rangle) \circ \left( 0 + \sum_{i=1}^N \Li \circ \pi^i \right) \Bigr) & = \\
         \Bigl( \left( \bigotimes_{i = 1}^N F^i \right) \circ \delta_{P_0}^N, \Ld \maps (W_0,W_1,\dots,W_N)\mapsto\sum_{i=1}^N \Li(W_0,W_i) \Bigr)
         
    \end{array}
\end{equation}

which can be pictured diagramatically as:

\[\begin{tikzcd}
	&& {\mathbb{R}^{P_0}\times \prod_{i = 1}^N \mathbb{R}^{P_i}} \\
	\\
	& {\mathbb{R}^{P_0}} & {\mathbb{R}^{P_0}\times \left( \prod_{i = 1}^N \mathbb{R}^{P_0} \times \mathbb{R}^{P_i} \right)} & {\prod_{i = 1}\mathbb{R}^{P_0} \times \mathbb{R}^{P_i}} \\
	{\mathbb{R}^{P_0}} && {\prod_{i = 1}^N\mathbb{R}^{P_0}} && {\prod_{i = 1}\mathbb{R}^{P_i}}
	\arrow["{\mathrm{id}}", from=3-2, to=4-1]
	\arrow["\Delta"', from=3-2, to=4-3]
	\arrow["{\prod_{i=1}^N \pi_0^i}", from=3-4, to=4-3]
	\arrow["{\prod_{i=1}^N \pi_1^i}"', from=3-4, to=4-5]
	\arrow["{{\hat{\pi}_1}}", from=3-3, to=3-4]
	\arrow["{{\hat{\pi}_0}}"', from=3-3, to=3-2]
	\arrow["{{b_1}}", curve={height=-12pt}, from=1-3, to=3-4]
	\arrow["{{b_0}}"', curve={height=12pt}, from=1-3, to=3-2]
	\arrow["{{\langle b_0,b_1\rangle}}"{description}, dashed, from=1-3, to=3-3]
\end{tikzcd}\]

In \eqref{eq:mtl_morph}, the computation of the pullback has effectively filtered the multiple copies of $\R^{P_0}$ and left us with the loss we want. Note that the objective function $0$ of the comultiplication morphism does not contribute to the overall objective. 

\subsection{Distributed Optimization via Functoriality of Gradient Descent}
\SetKwComment{Comment}{/* }{ */}
\begin{algorithm}\label{alg:mtl}
    \caption{Distributed MTL}
    \KwIn{Initial shared weights $W_0^0\in\R^{P_0}$, and task-specific weights $W_i^0\in\R^{P_i}$ for $i\in \{1,\dots,N\}$}
    \KwIn{A positive real learning rate $\gamma$}
    $W_i\gets W_i^0$ for $i\in \{0,\dots,N\}$\;
    \While{a stopping criterion is not reached}{
        Compute each iteration of the following for-loop in parallel\;
        \For{$i\gets 1$ \KwTo $N$}{
            $\texttt{grad\_}W_{0,i}\gets \nabla_{W_0} \mathcal{L}(W_0,W_i)$\;
            $\texttt{grad\_}W_i\gets \nabla_{W_i} \mathcal{L}(W_0,W_i)$\;
            $W_i\gets W_i - \gamma * \texttt{grad\_}W_i$\Comment*[r]{Update non-shared weights}
        }
        $\texttt{grad\_}W_0\gets \sum_{i=1}^N \texttt{grad\_}W_{0,i}$\Comment*[r]{Gradient of shared weights is sum of gradients from each task-specific learner}
        $W_0\gets W_0 - \gamma*\texttt{grad\_}W_0$\;
    }
    \Return{$W_0,W_1,\dots,W_N$}
\end{algorithm}

Having represented \eqref{eq:mtl_gd} as a morphism in $\OptEuc$, we can create a gradient descent based optimizer by applying Cor. \ref{thm:GD}. In particular we aim for a distributed gradient descent scheme. Exploiting the functoriality of $\GDEuc$, we prefer to compute the gradient for each morphism separately and then compose systems. The generalized gradient functor is illustrated in Section \ref{sec:intro}, Figure \ref{fig:mtl}.

Algorithm \ref{alg:mtl} describes the distributed scheme for multitask learning derived by applying $\GD$ to the individual subproblems and composing the resulting dynamical systems. Note that there is value in distributing computation this way as computing the gradients of weights will typically involve non-trivial backpropogation over potentially large networks. 
With this, we have successfully recovered hard parameter sharing for multitask learning and shown how to make a gradient descent optimizer that can be implemented in a distributed fashion.


\section{Discussion and Future Work}
Taking stock, it is fair to wonder what has been gained by specifying parameter sharing via decorated spans. One classic argument for the utility of applied category theory is the tight link between graphical problem depiction and its realization in a mathematical or computational data structure. Figure \ref{fig:mtl} contains two string diagrams which visually capture the structure of multitask learning objectives and optimizers respectively, enabling better communication of models between machine learning practitioners. Moreover, the relative ease with which one can describe compositional objectives supports more complex problem structures. Objectives with traditional regularization terms can be trivially described as compositional objectives, and soft parameter sharing should be fairly easily accommodated via the addition of penalization terms. More exotic hierarchical parameter sharing (see e.g. \cite{sogaard2016deep}) should be readily modeled using our approach to MTL. There is a computational benefit too; distributed optimization schemes become easier to implement with a rigorous definition of problem structure. Finally the generality of the techniques discussed allows the parameter sharing pattern of the given morphism to be applied to settings beyond smooth real-valued functions and traditional gradient descent.

As for the general framework defined in Sec. \ref{sec:GGD}, we believe there are interesting connections with previous work on CRDCs. Perhaps the most obvious direction for future work would be to understand the connections with the $\Para$ and Lens constructions of \cite{cruttwell2022categorical}. In particular, we hope that a unified treatment of both the compositional structure of models and the compositional structure of objectives can be achieved, perhaps using double categories.
Naturally another direction of future work is to find more examples of CRDCs and study not only learning problems in those settings, but other optimization problems such as those arising in operations research.

\section{Conclusion}
In summary, we have presented hypergraph categories $\OptC$ and $\DynamC$ of open objectives and dynamical systems respectively, and shown that there exists a hypergraph functor between them that maps a generalized objective to its corresponding gradient descent optimizer. Both $\OptC$ and $\DynamC$ are constructed with respect to an underlying Cartesian reverse derivative category, allowing for composition of optimization problems that are defined on a general optimization domain. Such compositionality induces a template for distributed optimization, and provides an accompanying graphical syntax that facilitates communication and sharing. Lastly, to provide evidence that existing machine learning paradigms can be described by our compositional framework, we described parameter sharing models for multitask learning using the hypergraph structure of $\OptC$, and highlighted the distributed gradient descent algorithm induced by this structure. 

\section{Acknowledgements}

The authors would like to thank Jules Hedges, Simon Willerton, Richard Samuelson, Matthew Hale, Evan Patterson, and David Spivak for helpful conversations. Tyler Hanks was supported by the National Science Foundation Graduate
Research Fellowship Program under Grant No. DGE-1842473. Any opinions, findings,
and conclusions or recommendations expressed in this material are those of the author(s)
and do not necessarily reflect the views of the NSF. James Fairbanks was supported by DARPA under award no. HR00112220038 and ONR under award no. N000142312339.

\bibliographystyle{eptcs}
\bibliography{main}

\appendix
\section{Proofs}\label{sec:app}
The following helper lemma will be crucial going forward.
\begin{lemma}\label{lem:helpers}
    Let $\phi\maps X\to Y$ and $\psi\maps X'\to Y'$ be arbitrary morphisms in $\mathrm{Lin}(\CC)$. Then the following equations hold:
    \begin{enumerate}
        \item $(\phi\times\psi)^\dag = \phi^\dag\times\psi^\dag$,
        \item $\pi_0\circ (\phi\times\psi)=\phi\circ\pi_0$,
        \item $\pi_1\circ (\phi\times\psi)=\psi\circ\pi_1$.
    \end{enumerate}
\end{lemma}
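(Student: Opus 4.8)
The plan is to split the three identities into two groups: the projection identities in parts 2 and 3, which are immediate from the universal property of the product, and the dagger identity in part 1, which is where the real content lies and which I would reduce to the interaction between $(-)^\dag$ and the biproduct structure of $\lin(\CC)$.

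For parts 2 and 3 I would simply unfold the definition of the product of morphisms. By construction $\phi\times\psi = \langle\phi\circ\pi_0,\psi\circ\pi_1\rangle\maps X\times X'\to Y\times Y'$, where on the right-hand side the projections are those out of $X\times X'$. Post-composing with the projections out of $Y\times Y'$ and invoking the universal property of the product yields $\pi_0\circ(\phi\times\psi)=\phi\circ\pi_0$ and $\pi_1\circ(\phi\times\psi)=\psi\circ\pi_1$ directly. These are nothing more than the standard naturality equations of a Cartesian product; the only care required is to track that the two occurrences of each $\pi_i$ in a given equation live over different objects.

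Part 1 is the substantive step. Here I would exploit that, by Proposition 24 in \cite{cockett2019reverse}, $\lin(\CC)$ carries finite $\dag$-biproducts, so the categorical product $\times$ coincides with the biproduct: it comes equipped with injections $\iota_0,\iota_1$ satisfying the biproduct equations (in particular $\iota_0\circ\pi_0+\iota_1\circ\pi_1=\id$) together with the dagger relations $\pi_i^\dag=\iota_i$ and $\iota_i^\dag=\pi_i$. The key move is to rewrite the product of morphisms in biproduct form,
\[
    \phi\times\psi = \iota_0\circ\phi\circ\pi_0 + \iota_1\circ\psi\circ\pi_1,
\]
which one checks by post-composing both sides with $\pi_0$ and $\pi_1$ and comparing against parts 2 and 3. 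Applying the dagger, using additivity $(f+g)^\dag=f^\dag+g^\dag$ and contravariant functoriality of $(-)^\dag$ term by term, and then substituting $\pi_i^\dag=\iota_i$ and $\iota_i^\dag=\pi_i$, transforms the expression into $\iota_0\circ\phi^\dag\circ\pi_0 + \iota_1\circ\psi^\dag\circ\pi_1$, which is precisely the biproduct form of $\phi^\dag\times\psi^\dag$.

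I expect the main obstacle to be almost entirely bookkeeping rather than conceptual: one must keep straight which injection and projection acts on which summand, and in particular that $(\pi_0^X)^\dag=\iota_0^X$ maps into $X\times X'$ while $(\iota_0^Y)^\dag=\pi_0^Y$ maps out of $Y\times Y'$, so that the composite lands in $Y\times Y'\to X\times X'$ as required for $(\phi\times\psi)^\dag$. The only genuine ingredient beyond the product's universal property is the $\dag$-biproduct fact that projections and injections are dagger-adjoint, so if that is taken as given from \cite{cockett2019reverse}, the proof collapses to the single algebraic manipulation above.
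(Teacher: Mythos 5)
Your proposal is correct and follows essentially the same route as the paper's proof: both establish parts 2 and 3 directly from the product structure (the paper via generalized elements, you via the universal property, which is the same computation), and both prove part 1 by rewriting $\phi\times\psi$ in biproduct form $\iota_0\circ\phi\circ\pi_0+\iota_1\circ\psi\circ\pi_1$, then applying additivity of $(-)^\dag$, contravariant functoriality, and the exchange $\pi_i^\dag=\iota_i$, $\iota_i^\dag=\pi_i$ to land on the biproduct form of $\phi^\dag\times\psi^\dag$. The only cosmetic difference is that you verify the biproduct decomposition by post-composing with projections, whereas the paper cites it as Lemma 3 of the reference on reverse derivative categories.
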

\begin{proof}
    For (1), we have
    \begin{equation}
        \begin{array}{ll@{}ll}
            (\phi\times\psi)^\dag & = \langle \phi\circ\pi_0,\psi\circ\pi_1\rangle^\dag & \\
             & = (\iota_0\circ\phi\circ\pi_0 + \iota_1\circ\psi\circ\pi_1)^\dag & \text{ Lemma 3 in \cite{cockett2019reverse}}\\
             & = (\iota_0\circ\phi\circ\pi_0)^\dag + (\iota_1\circ\psi\circ\pi_1)^\dag & \text{ Property of $\dag$-biproducts}\\
             &= \pi_0^\dag\circ\phi^\dag\circ\iota_0^\dag + \pi_1^\dag\circ\psi^\dag\circ\iota_1^\dag & \text{ Contravariant functoriality} \\
             &= \iota_0\circ\phi^\dag\circ\pi_0 + \iota_1\circ\psi^\dag\circ\pi_1 & \text{ Lemma 21 in \cite{cockett2019reverse}}\\
             &= \langle \phi^\dag\circ\pi_0,\psi^\dag\circ\pi_1\rangle = \phi^\dag\times\psi^\dag.
        \end{array}
    \end{equation}
    For (2), let $(x,x')\in X\times X'$ be generalized elements. Then
    \begin{equation}
        \pi_0\circ(\phi\times\psi)(x,x') = \phi(x) = \phi(\pi_0(x,x')).
    \end{equation}
    A symmetric argument holds for (3).
\end{proof}

\obj*
\begin{proof} 
    $O$ is plainly functorial as it is just the contravariant hom functor $\CC(-,R)$ restricted to act only on the linear maps of $\CC$. We still need to verify the symmetric lax monoidal axioms. For naturality of the product comparison, we need the following diagram to commute for all objects $X,X',Y,Y'$ and linear maps $\phi\maps X'\to X$ and $\psi\maps Y'\to Y$:
\[\begin{tikzcd}
	{O(X)\times O(Y)} && {O(X')\times O(Y')} \\
	{O(X\times Y)} && {O(X'\times Y')}
	\arrow["{\varphi_{X,Y}}"', from=1-1, to=2-1]
	\arrow["{O(\phi)\times O(\psi)}", from=1-1, to=1-3]
	\arrow["{\varphi_{X',Y'}}", from=1-3, to=2-3]
	\arrow["{O(\phi\times\psi)}"', from=2-1, to=2-3]
\end{tikzcd}\]
    Letting $f\maps X\to R$ and $g\maps Y\to R$ be arbitrary, following the top path yields the morphism 
    \begin{equation}\label{eq:top}
        f\circ\phi\circ\pi_0 + g\circ\psi\circ\pi_1
    \end{equation}
    while following the bottom path yields
    \begin{equation}\label{eq:bot}
        (f\circ\pi_0+g\circ\pi_1)\circ(\phi\times\psi)=f\circ\pi_0\circ(\phi\times\psi)+g\circ\pi_1\circ(\phi\times\psi),
    \end{equation}
    where the equality comes from $\textbf{LA.1}.$ Now let $x\in X'$ and $y\in Y'$ be generalized elements. Then applying \eqref{eq:top} to $(x,y)$ gives $f(\phi(x))+g(\psi(y))$ while applying \eqref{eq:bot} to $(x,y)$ gives
    \begin{equation}
        f(\pi_0(\phi(x),\psi(y))) + g(\pi_1(\phi(x),\psi(y))) = f(\phi(x))+g(\psi(y)),
    \end{equation}
    as desired. Commutativity of the symmetry, associativity, and unitality diagrams all follow from the fact that the hom-set $\CC(X'\times Y', R)$ is a commutative monoid.
\end{proof}

\opt*
\begin{proof}
    For functoriality of $D$, let $\phi\maps X\to Y, \psi\maps Y\to Z$, and $v\maps Z\to Z$ be arbitrary. For preservation of composition, we have
    \begin{multline}
        D(\phi\circ\psi)(v)\coloneqq (\phi\circ\psi)^\dag\circ v\circ (\phi\circ\psi)
        = (\psi^\dag\circ\phi^\dag)\circ v\circ (\phi\circ\psi)
        \\= \psi^\dag\circ (\phi^\dag\circ v\circ \phi)\circ\psi = (D(\psi)\circ D(\phi))(v),
    \end{multline}
    where the first equality comes from contravariant functoriality of $(-)^\dag$ and the second equality comes from associativity of composition. Similarly, for preservation of identities, we have
    \begin{equation}
        D(\id_Z)(v)\coloneqq \id_Z^\dag\circ v\circ \id_Z = \id_Z\circ v\circ \id_Z = v,
    \end{equation}
    where the first equality again comes from functoriality of $(-)^\dag$.

    To verify the naturality of the product comparison, we need the following diagram to commute for all $\phi\maps X'\to X$ and $\psi\maps Y'\to Y$:
\[\begin{tikzcd}
	{D(X)\times D(Y)} && {D(X')\times D(Y')} \\
	{D(X\times Y)} && {D(X'\times Y')}
	\arrow["{\varphi_{X,Y}}"', from=1-1, to=2-1]
	\arrow["{D(\phi)\times D(\psi)}", from=1-1, to=1-3]
	\arrow["{\varphi_{X',Y'}}", from=1-3, to=2-3]
	\arrow["{D(\phi\times\psi)}"', from=2-1, to=2-3]
\end{tikzcd}\]
    For this, let $(v,w)\in D(X)\times D(Y)$ be arbitrary. Then, following the top path yields 
    \begin{equation}
        \pi_0^\dag\circ\phi^\dag\circ v\circ\phi\circ\pi_0 + \pi_1^\dag\circ\psi^\dag\circ w\circ\psi\circ\pi_1,
    \end{equation}
    while following the bottom path yields
    \begin{equation}
        \begin{array}{ll@{}ll}
            (\phi\times\psi)^\dag\circ(\pi_0^\dag\circ v\circ\pi_0 + \pi_1^\dag\circ w\circ \pi_1)\circ (\phi\times\psi) &  \\
            = (\phi\times\psi)^\dag\circ(\pi_0^\dag\circ v\circ\pi_0\circ (\phi\times\psi) + \pi_1^\dag\circ w\circ\pi_1\circ(\phi\times\psi)) & \text{ Left additivity} \\
            = (\phi\times\psi)^\dag\circ\pi_0^\dag\circ v\circ \pi_0\circ(\phi\times\psi) + (\phi\times\psi)^\dag\circ\pi_1^\dag\circ w\circ\pi_1\circ(\phi\times\psi) & \text{ Additivity of linear maps}\\
            = (\phi\times\psi)^\dag\circ\pi_0^\dag\circ v\circ \phi\circ\pi_0 + (\phi\times\psi)^\dag\circ\pi_1^\dag\circ w\circ\psi\circ\pi_1 & \text{ Lemma \ref{lem:helpers}}\\
            = (\pi_0\circ \phi\times\psi)^\dag\circ v\circ \phi\circ\pi_0 + (\pi_1\circ\phi\times\psi)^\dag\circ w\circ\psi\circ\pi_1 & \text{ Contravariant functoriality}\\
            = (\phi\circ\pi_0)^\dag\circ v\circ\phi\circ\pi_0 + (\psi\circ\pi_1)^\dag\circ w\circ \psi\circ\pi_1 & \text{ Lemma \ref{lem:helpers}}\\
            = \pi_0^\dag\circ\phi^\dag\circ v\circ \phi\circ \pi_0 + \pi_1^\dag\circ\psi^\dag\circ w \circ \psi\circ\pi_1,
        \end{array}
    \end{equation}
    as desired. The symmetric monoidal coherence axioms again follow from the commutative monoid structure of hom-sets.
\end{proof}

\ggd*
\begin{proof}
    Let $\gd\coloneqq \gd_\CC, O\coloneqq O_\CC^R,$ and $D\coloneqq D_\CC$. For naturality, we need to verify that the following diagram commutes for all objects $X,Y\in \lin(\CC)$ and linear maps $\phi\maps Y\to X$:
\[\begin{tikzcd}
	{O(X)} && {O(Y)} \\
	\\
	{D(X)} && {D(Y)}
	\arrow["{O(\phi)}", from=1-1, to=1-3]
	\arrow["{\gd_X}"', from=1-1, to=3-1]
	\arrow["{\gd_Y}", from=1-3, to=3-3]
	\arrow["{D(\phi)}"', from=3-1, to=3-3]
\end{tikzcd}\]
    Let $f\maps X\to R$ be an arbitrary objective. Then, following the top path yields the optimizer
    \begin{equation}\label{eq:top1}
        -\rev[f\circ\phi]_1\coloneqq -\rev[f\circ\phi]\circ\langle \id_Y,1_{YR}\rangle.
    \end{equation}
    Likewise, following the bottom path yields
    \begin{equation}\label{eq:bot1}
        -\phi^\dag\circ\rev[f]_1\circ\phi\coloneqq -\phi^\dag\circ\rev[f]\circ\langle \id_X,1_{XR}\rangle\circ\phi.
    \end{equation}
    Now consider applying \eqref{eq:top1} to a generalized element $y\in Y$:
    \begin{equation}
        \begin{array}{ll@{}ll}
            -\rev[f\circ\phi]\circ\langle \id_Y, 1_{YR}\rangle(y) & = -\rev[f\circ\phi](y,1_R) \\
             & = -\rev[\phi](y,\rev[f](\phi(y),1_R)) & \text{ Chain rule}\textbf{ RD.5}\\
             & = -\phi^\dag(\rev[f](\phi(y),1_R)) & \text{ Linearity.}\\
        \end{array}
    \end{equation}
    Finally, applying \eqref{eq:bot1} to the same element gives
    \begin{equation}
        -\phi^\dag(\rev[f](\langle \id_X, 1_{XR}\rangle(\phi(y)))) = -\phi^\dag(\rev[f](\phi(y),1_R))
    \end{equation}
    as desired. To verify the transformation is monoidal, we must show that the following diagrams commute:
\[\begin{tikzcd}
	{O(X)\times O(Y)} && {D(X)\times D(Y)} & {*} && {O(*)} \\
	{O(X\times Y)} && {D(X\times Y)} &&& {D(*)}
	\arrow["{\gd_X\times\gd_Y}", from=1-1, to=1-3]
	\arrow["{\varphi_{X,Y}}"', from=1-1, to=2-1]
	\arrow["{\varphi_{X,Y}}", from=1-3, to=2-3]
	\arrow["{\gd_{X\times Y}}"', from=2-1, to=2-3]
	\arrow["{\varphi_0}", from=1-4, to=1-6]
	\arrow["{\varphi_0}"', from=1-4, to=2-6]
	\arrow["{\gd_*}", from=1-6, to=2-6]
\end{tikzcd}\]
    Let $g\maps Y\to R$ be another arbitrary objective. Following the top path of the product comparison diagram yields the system
    \begin{equation}
        -\pi_0^\dag\circ\rev[f]_1\circ\pi_0 -\pi_1^\dag\circ\rev[g]_1\circ\pi_1
    \end{equation}
    while following the bottom path gives
    \begin{equation}
    \begin{array}{ll@{}ll}
        -\rev[f\circ\pi_0+g\circ\pi_1]_1 &= -\rev[f\circ\pi_0+g\circ\pi_1]\circ\langle \id_{X\times Y}, 1_{X\times Y, R}\rangle \\
        &= -(\rev[f\circ\pi_0] + \rev[g\circ\pi_1])\circ\langle \id_{X\times Y}, 1_{X\times Y, R}\rangle & \textbf{ RD.1}\\
        &=-\rev[f\circ\pi_0]\circ \langle \id_{X\times Y}, 1_{X\times Y, R}\rangle -\rev[g\circ\pi_1]\circ \langle \id_{X\times Y}, 1_{X\times Y, R}\rangle & \textbf{ LA.1}\\
        
        &=-\rev[f\circ\pi_0]_1 - \rev[g\circ\pi_1]_1.
    \end{array}
    \end{equation}
    These can be shown equivalent following the same reasoning as for naturality, namely by applying the chain rule. The unit comparison diagram commutes trivially as $D(*)$ is terminal.
\end{proof}

\end{document}